\theoremstyle{definition}
\newtheorem{theorem}{Theorem}
\newtheorem{proposition}[theorem]{Proposition}
\newtheorem{definition}[theorem]{Definition}
\newtheorem{example}[theorem]{Example}
\newtheorem{remark}[theorem]{Remark}
\tikzset{%
  highlight/.style={rectangle,rounded corners,fill=red!15,draw,
    fill opacity=0.5,thick,inner sep=0pt}
}
\newcommand{\C}{\mathbb{C}}
\newcommand{\K}{\mathbb{K}}
\newcommand{\Nc}{\mathcal{N}}
\journal{Advances in Applied Mathematics}
\begin{document}

\begin{frontmatter}



\title{Product of Tensors and Description of Networks}

\author[label1]{Luca Chiantini}
\ead{luca.chiantini@unisi.it}
\author[label2]{Giuseppe Alessio D'Inverno}
\ead{gdinvern@sissa.it}
\author[label1]{Sara Marziali}
\ead{sara.marziali@student.unisi.it}
\affiliation[label1]{organization={Department of Information Engineering and Mathematics, University of Siena},
            addressline={Via Roma 56},
            city={Siena},
            postcode={58100},
            country={Italy}}

\affiliation[label2]{organization={International School for Advanced Studies},
            addressline={Via Bonomea 265},
            city={Trieste},
            postcode={34136},
            country={Italy}}

\begin{abstract}
Any kind of network can be naturally represented by a directed acyclic graph; additionally, activation functions can determine the reaction of each node of the network with respect to the signal(s) incoming.
We study the characterization of the signal distribution in a network under the lens of tensor algebra. More specifically, we describe every activation function as tensor distributions with respect to the nodes, called \textit{activation tensors}. The distribution of the signal is encoded in the \textit{total tensor} of the network. We formally prove that the total tensor can be obtained by computing the \textit{Batthacharya-Mesner Product}, an $n$-ary operation for tensors of order $n$, on the set of activation tensors properly ordered and processed via two basic operations, that we call \textit{blow}  and \textit{forget}. Our theoretical framework can be validated through the related code developed in Python.
\end{abstract}


\begin{highlights}
\item Study of the characterization of the signal distribution in a network under the lens of tensor algebra
\item Formal proof of equality between the \textit{total tensor} of the network and the \textit{Batthacharya-Mesner Product} on the set of the activation tensors
\end{highlights}

\begin{keyword}
Networks \sep Tensor algebra \sep Batthacharya-Mesner Product

\MSC[2020] 15A69
\end{keyword}

\end{frontmatter}

\vskip1cm

 {\it \hfill The last shall be the first.} 
 
 {\it \hfill Matthew, 16:20}

\section*{Introduction}\label{sec:introduction}

In this work, we introduce a description of the behavior of networks which is based on the Batthacharya-Mesner Product (BMP) of tensors.
The BMP is an $n$-ary operation for tensors of order $n$, which generalizes the binary matrix multiplication.
It was introduced in~\cite{BM1} (see also~\cite{BM2} and~\cite{B}) and used for a description of algebraic and geometrical properties of tensors in~\cites{G, GF1, GF2, TKMP}. 

In our setting, a network is based on the underlying directed graph which describes the flow of a signal through a set of nodes. We study directed acyclic graphs (DAGs), which are graphs with no oriented cycles, to eliminate the circular transmissions of the signal. The absence of cycles guarantees the existence of total orderings of nodes compatible with the partial ordering defined by the graph. We choose a total ordering as a constituting aspect of the network.
In addition, we assign to each node an activation tensor which is responsible for the reaction of the node to the signals it receives. 

The order of the activation tensor corresponds to the number of incoming arrows ($in$-$degree$) of the node plus one. The dimensions of the activation tensor, in turn, are equal to the number of states of each node, which we assume to be constant through the network. In practice, for a node $a$ whose parents are the nodes $(b_{j_1},\dots,b_{j_d})$ (in the fixed  total order), the entry $A_{i_1,\dots, i_d,i_{d+1}}$ of the activation tensor $A$ measures the probability that $a$ assumes the state $i_{d+1}$ after receiving the signal $i_k$ from node $b_{j_k}$, for $k=1,\dots,d$. We consider the case in which all nodes transmit a signal equal to the state they assume and that no delays occur in communication among the nodes.

In our analysis, we transform the collection of tensors $\{A_i\}$, for each node $a_i$ of the graph, into tensors $\{T_{a_i}\}$ of order $n$, equal to the number of nodes in the graph, by two procedures on the indexes called {\it blow} (or {\it inflation}, following the terminology of~\cite{G}) and {\it forget} (or {\it copy}), described in Section~\ref{sec:bm_networks}.

We observe that these operations are valid regardless of the number of nodes and their activation tensors, which can be arbitrary and different for each node. Examples of reasonable activation tensors (following the setting of~\cite{BC}) are given throughout the paper. We observe that, to increase the correspondence with an interpretation of the network in statistics, one could require some stochastical restrictions in the activation tensors, e.g. the entries being positive reals in the interval $[0,1]$ and the marginalization relative to each state $i_{d+1}$ being equal to $1$. We decided not to introduce such restrictions because they are unnecessary for our computations.

It turns out that, for a fixed network with all nodes observable, the set of total tensors describing the final states of the network forms a toric variety.
Since in our network all nodes are observable, the total tensor of the network, which depends on parameters, fills a toric subvariety of the (projective) space of tensors. The main geometric aspects of this variety are therefore strictly related to the algebraic structure of the BMP, and we will explore the connection, as well as extensions to networks with hidden nodes, through marginalization of the BMP in future investigations.

Our main theorem (Theorem~\ref{main}) shows that it is possible to determine the final state of the network by computing the BMP, with a given ordering, of the activation tensors of its nodes suitably modified by the blow and forget operations introduced above.

Here is the structure of the paper. In Section~\ref{sec:notation}, we introduce the networks we want to analyze and define the notation and basic concepts on tensors, pointing out the notion of activation tensor. In Section~\ref{sec:bm_product}, we define the BMP of tensors with its algebraic properties, and finally, in Section~\ref{sec:bm_networks}, we show how it is possible to characterize a network based on the BMP.

\section{Basic notation and concepts}\label{sec:notation}

\subsection{Basic tensor notation}
In this section, we introduce the main notation for tensors and networks. A basic reference for tensor theory is~\cite{H, landsberg2011tensors}.
\begin{definition}  Let $\mathrm{K}$ be an algebraically closed field and $V_1, \dots, V_d$ vector spaces. A function
$$f: V_1 \times \cdots \times V_d \to \mathrm{K}$$
is \emph{multilinear} if it is linear in each factor $V_l$, $l = 1, \dots, d$. The space of such multilinear functions is denoted by $V_1^* \otimes V_2^* \otimes \cdots \otimes V_d^*$ and called the \emph{tensor product} of $V_1^*, \dots, V_k^*$. Elements $T \in V_1^* \otimes V_2^* \otimes \cdots \otimes V_d^*$ are called \emph{tensors}. The integer $d$ is sometimes called the \emph{order} of $T$. The sequence of natural numbers $(\mathbf{v}_1, \dots, \mathbf{v}_d)$ is called the \emph{dimension} of $T$.
We say a tensor is \emph{cubical} of order $n$ if $V_1 = \dots = V_d = n$.
\end{definition}
Our labeling notation for a tensor $T \in \C^2 \otimes \C^2 \otimes \C^2$ reported in~\Cref{fig:tensor_indices}.

\begin{figure}
    \centering
    \begin{tikzpicture}[scale=0.75,baseline={([yshift=-.5ex]current bounding box.center)}]
        \draw (0,0) node[scale=1] {100};
        \draw (2,0) node[scale=1] {110};
        \draw (1,1) node[scale=1] {101};
        \draw (3,1) node[scale=1] {111};
        \draw (0,2) node[scale=1] {000};
        \draw (2,2) node[scale=1] {010};
        \draw (1,3) node[scale=1] {001};
        \draw (3,3) node[scale=1] {011};      
        \draw (0,0.3) -- (0,1.7); 
        \draw (0.5,0) -- (1.5,0); 
        \draw (2,0.3) -- (2,1.7); 
        \draw (0.5,2) -- (1.5,2);        
        \draw (1,1.3) -- (1,2.7); 
        \draw (1.5,1) -- (2.5,1); 
        \draw (3,1.3) -- (3,2.7); 
        \draw (1.5,3) -- (2.5,3);       
        \draw (0.3,0.3) -- (0.7,0.7); 
        \draw (0.3,2.3) -- (0.7,2.7); 
        \draw (2.3,0.3) -- (2.7,0.7); 
        \draw (2.3,2.3) -- (2.7,2.7);      
    \end{tikzpicture}\label{tensor:1} 
    \caption{\textbf{Notation.} The labeling notation we consider for a tensor $T \in \C^2 \otimes \C^2 \otimes \C^2$.}
    \label{fig:tensor_indices}
\end{figure}

\subsection{Networks}

A basic reference for networks and activation tensors is~\cite{M}. Following the notation proposed in~\cite{DI}, let $G$ be a DAG with $V(G)$ the set of \textit{nodes}, or vertices, and $E(G)$ the set of \textit{edges}. We call {\it arrows} the oriented edges of $G$.
For any node $a \in V(G)$, we say that the node $b \in V(G)$ is a {\it parent} of $a$ if there is a directed arrow $b \to a$ in the graph, i.e. $b \to a \in E(G)$, and we define $parents(a) = \{ b \in V(G) : b \text{ is a parent of } a \}$. The \textit{in-degree} of a node $a$, \textit{in-deg}$(a)$, is the cardinality of $parents(a)$. The \textit{cardinality} of $G$ is denoted by $| V(G) | = q$.
\medskip

\begin{definition}\label{def:network}
In our setting, a \textit{network} $\Nc =\Nc (G, \mathcal{A})$ is defined by the following series of data:
\begin{enumerate}
\item[(\textit{i})] a directed graph $G=(V(G), E(G))$ without (oriented) cycles, whose nodes represent the discrete variables of the network; \label{enum:1}
\item[(\textit{ii})] a set $\mathcal{A}$ of activation tensors $A_i$, one for each node $a_i \in V(G)$, which determines the output given by the node, based on the inputs it receives;

\item[(\textit{iii})] an ordering of the nodes which is compatible with the partial ordering given by the graph (topological sort). \label{enum:3}
\end{enumerate}
\end{definition}

Note that the absence of cycles (as in~\Cref{enum:1}$(i)$) in the graph implies that there exists some total ordering of the nodes compatible with the partial ordering defined by the graph (see~\Cref{enum:3}$(iii)$). This can be easily seen by induction.
\medskip

We assume that all the variables associated to the nodes of the graph can assume the same $n$ states (numbered from $0$ to $n-1$). In this case we will talk about {\it $n$-ary networks}. Unless otherwise stated, in the upcoming sections we will consider binary networks.
\medskip

By~\Cref{def:network}, a network $\mathcal{N}(G)$ is defined by its underlying DAG and the tensors of appropriate size that are associated with each vertex.  These tensors are called activation tensors and, since they are not very standard, they are described in more detail below.

\begin{definition}\label{def:activation} Let $a_i$ be a node of an $n$-ary network $\Nc (G)$. Let $d=\text{\it in-deg}(a_i)$. The \emph{activation tensor} $A_i \in \mathcal{A}$ that the network associates to $a_i$ is any tensor of order $d+1$ such that
$$A_i \in \underbrace{(\C^n)\otimes\cdots\otimes (\C^n)}_{d+1 \textit{ times}}.$$
\end{definition}


In particular, if $d = 0$, then an activation tensor at $a_i$ is simply a vector of $\C^n$. If $d = 1$, then an activation tensor at $a_i$ is an $n\times n $ matrix of complex numbers.

In the binary case, we will assume that node $a_i$ is inactive when its output is $0$ and $a_i$ is activated if its output is $1$.

\subsection{Tensors of networks}

Let us explain the role of the activation tensors of the network.

Consider a $n$-ary signal transmitted by the nodes of the network $\mathcal{N}$, following the pattern given by the arrows, \textit{i.e.} the directed edges. Let $d$ be the in-degree of $a$ and $\{ b_1,\dots,b_d \}$ the set of its parents, ordered accoding to the total ordering of the network. Then, node $a$ decides its state depending on the signals it receives, which it transmits to all its outgoing arrows.
\smallskip

The entry $A_{i_1,\dots,i_d,i_{d+1}}$ of the activation tensor represents the \emph{frequency} with which the node $a$ assumes the state $i_{d+1}$ after receiving the signal $i_1$ from $b_1$, the signal $i_2$ from $b_2$, \dots, and the signal $i_d$ from $b_d$.

\remark{We could take the stochastic point of view, by requiring that $\sum_{l=0}^{n-1} A_{i_1,\dots,i_d,l}=1$, so that $A_{i_1,\dots,i_d,i_{d+1}}$ represents the \emph{probability} that the node $a$ assumes the state $i_{d+1}$ after receiving the signal $i_1$ from $b_1$, \dots, and the signal $i_d$ from $b_d$. We choose not to adopt this point of view since the assumption is not necessary in our analysis.}

\begin{example}\label{qt1}
Some examples of activation tensors, for different values of $d$, are listed in the following.
\begin{itemize}
    \item When $d=0$,  the activation tensor $A=(a_0,\dots,a_{n-1})$ is defined such that node $a$ assumes the state $i$ (and casts the signal $i$) with frequency $a_i$.
    \item When $d=1$, well-known examples of activation tensors are the \emph{Jukes-Cantor} matrices, in which all the entries in the diagonal are equal to a parameter $\alpha$ and all the entries off the diagonal are equal to a parameter $\beta$. E.g., in the binary case,
\[
A= \begin{pmatrix} \alpha &\beta \\ \beta & \alpha \end{pmatrix},
\] with $\alpha, \beta \in \mathbb{C}$.


\item For $d\geq 1$, a relevant case of activation tensor for binary networks is the \emph{threshold of value one}, or the tensor associated to the Boolean logical operator $\exists$. The node $a$ assumes the value $1$ if at least one of the parents casts $1$, while $a$ assumes the state $0$ otherwise. Hence $A_{i_1,\dots,i_d,i_{d+1}}$ is $0$ if $i_1=\dots=i_d=0$ and $i_{d+1}=1$, or when at least one index $i_j$, $j=0,\dots, d$, is $1$ and $i_{d+1}=0$. Then, the activation tensor $A$ assumes the same value $1$ in all the remaining cases.
An example for  $d=2$, using the index convention as the tensor in~\Cref{fig:tensor_indices}, is represented in~\Cref{fig:threshold_1}.
This activation tensor can be extended to more general, $n$-ary networks.

\begin{figure}
    \centering
    \begin{tikzpicture}[scale=0.75,baseline={([yshift=-.5ex]current bounding box.center)}]

    \draw (0,0) node[scale=1] {$0$};
    \draw (2,0) node[scale=1] {$0$};
    \draw (1,1) node[scale=1] {$1$};
    \draw (3,1) node[scale=1] {$1$};
    \draw (0,2) node[scale=1] {$1$};
    \draw (2,2) node[scale=1] {$0$};
    \draw (1,3) node[scale=1] {$0$};
    \draw (3,3) node[scale=1] {$1$};
    
    \draw (0,0.3) -- (0,1.7); 
    \draw (0.3,0) -- (1.7,0); 
    \draw (2,0.3) -- (2,1.7); 
    \draw (0.3,2) -- (1.7,2); 
    
    \draw (1,1.3) -- (1,2.7); 
    \draw (1.3,1) -- (2.7,1); 
    \draw (3,1.3) -- (3,2.7); 
    \draw (1.3,3) -- (2.7,3);
    
    \draw (0.3,0.3) -- (0.7,0.7); 
    \draw (0.3,2.3) -- (0.7,2.7); 
    \draw (2.3,0.3) -- (2.7,0.7); 
    \draw (2.3,2.3) -- (2.7,2.7); 
    \end{tikzpicture}
    \caption{\textbf{Activation tensor with threshold of value one.} Example of activation tensor of threshold of value one for a node in a binary network, with $d = 2$.}
    \label{fig:threshold_1}
\end{figure}

\item The activation tensor above introduced can be modified, replacing the output $0,1$ with two parameters $\alpha, \beta \in \mathbb{C}$, yielding the \emph{quantum}-\emph{threshold of value one}. Specifically, the node $a$ will assume value $\alpha$ if it obeys the rule, and value $\beta$ elsewise.

 For $d=2$ the tensor (see~\Cref{fig:quantum_threshold}) is obtained by modifying the $0$'s with $\beta$'s and the $1$'s with $\alpha$'s in the tensor of~\Cref{fig:threshold_1}.

\begin{figure}
    \centering
    \begin{tikzpicture}[scale=0.75,baseline={([yshift=-.5ex]current bounding box.center)}] 

    \draw (0,0) node[scale=1] {$\beta$};
    \draw (2,0) node[scale=1] {$\beta$};
    \draw (1,1) node[scale=1] {$\alpha$};
    \draw (3,1) node[scale=1] {$\alpha$};
    \draw (0,2) node[scale=1] {$\alpha$};
    \draw (2,2) node[scale=1] {$\beta$};
    \draw (1,3) node[scale=1] {$\beta$};
    \draw (3,3) node[scale=1] {$\alpha$};
    
    \draw (0,0.3) -- (0,1.7); 
    \draw (0.3,0) -- (1.7,0); 
    \draw (2,0.3) -- (2,1.7); 
    \draw (0.3,2) -- (1.7,2); 
    
    \draw (1,1.3) -- (1,2.7); 
    \draw (1.3,1) -- (2.7,1); 
    \draw (3,1.3) -- (3,2.7); 
    \draw (1.3,3) -- (2.7,3);
    
    \draw (0.3,0.3) -- (0.7,0.7); 
    \draw (0.3,2.3) -- (0.7,2.7); 
    \draw (2.3,0.3) -- (2.7,0.7); 
    \draw (2.3,2.3) -- (2.7,2.7); 
    \end{tikzpicture}
    \caption{\textbf{Activation tensor with quantum-threshold of value one.} Example of activation tensor of threshold of value one for a node in a binary network, with $d = 2$, and parameters $\alpha, \beta \in \mathbb{C}$.}
    \label{fig:quantum_threshold}
\end{figure}
\end{itemize}
\end{example}

\begin{definition}\label{def:total} In a network $\Nc(G, \mathcal{A})$, with $\left | V(G) \right | = q$ and nodes $a_1,\dots, a_q$, the \emph{total tensor} associated to the network is the order-$q$ tensor $N$ of size $\underbrace{n \times \cdots \times n}_{q\textit{ times}}$ whose entry $N_{i_1,\dots,i_q}$, where $i_h = 0, \dots, n-1$ for $h=1, \dots, q$, is defined  as follows. For all nodes $a_j$, $j = 1, \dots, q$, $\text{\it in-deg}(a_j) = d_j<q$, let $A_j$ be the activation tensor and $P_j$  be the $d_j$-uple of indices in $\{i_1,\dots,i_q\}$ corresponding to the parents of $a_j$, \textit{i.e.} $P_j = 
( i_k :  b_k \in parents(a_j) )$ (where the tuple is ordered according to the total ordering of the network). Then

\begin{equation}\label{eq:total_tensor}
N_{i_1,\dots,i_q}= \Pi_{j=1}^q (A_j)_{\mathsf{concat}[P_j,i_j]}.
\end{equation} \label{def:total_tensor}
\end{definition}

where $\mathsf{concat}[x,y]$ is the concatenation of tuples $x$ and $y$.

The following examples, in which we assume the network is binary, \textit{i.e.} only signals $0$ and $1$ can be transmitted between the nodes, illustrate the meaning of the total tensor. 

\begin{example}
Consider the network in~\Cref{fig:Markov_network} with three nodes $(a, b, c)$ (in the unique possible ordering). It corresponds to a short Markov chain in which $q = 3$. \\We endow the nodes with the following activation tensors: the vector $A = (\alpha,\beta)$ for the source $a$, the Jukes-Cantor matrix $B = \begin{pmatrix} \alpha & \beta \\ \beta & \alpha \end{pmatrix}$ for $b$ and set $C = B$ for $c$.

\smallskip
\begin{figure}[!htt]
    \centering
    \begin{tikzpicture}[
            > = stealth, 
            shorten > = 1pt, 
            auto,
            node distance = 3cm, 
            semithick 
        ]
        \tikzstyle{every state}=[
            draw = black,
            thick,
            fill = white,
            minimum size = 12mm
        ]
        \node[state] (1) {$a$};
        \node[state] (2) [right of=1] {$b$};
        \node[state] (3) [right of=2] {$c$}; 

        \path[->] (1) edge node {} (2);
        \path[->] (2) edge node {} (3); 
    \end{tikzpicture}
    \caption{\textbf{Markov chain with three nodes.} Tensor network with three nodes $a, b, c$. Node $a$ transmits a binary signal to node $b$ which, in turn, sends a signal to node $a$.}
    \label{fig:Markov_network}
\end{figure}
In order to compute the total tensor, we determine the indices of the parents of each node.
Let $a, b, c$ be renamed to $a_1, a_2, a_3$, respectively. For node $a_1$, which is the source, $\textit{in-deg}(a_1) = 0$ and $P_1 = ()$.
For node $a_2$, we have $\textit{in-deg}(a_2) = 1$, $parents(a_2) = \{a_1\}$ and $P_2 = (i_1)$.
Finally, for node $a_3$, we have $\textit{in-deg}(a_3) = 1$, $parents(a_3) = \{a_2\}$ and $P_3 = (i_2)$.

Then, the total tensor $N$ of the Markov chain with three nodes, according to~\Cref{def:total_tensor}, can be computed as
\begin{equation*}\label{eq:total_tensor_Markov}
N_{i_1, i_2, i_3} = A_{i_1} B_{i_1, i_2} C_{i_2, i_3},
\end{equation*}
and represented as in~\Cref{fig:total_tensor_markov}.

\begin{figure}
    \centering
    \begin{tikzpicture}[scale=0.75,baseline={([yshift=-.5ex]current bounding box.center)}] 
    \draw (0,0) node[scale=1] {$\alpha\beta^2$};
    \draw (2,0) node[scale=1] {$\alpha\beta^2$};
    \draw (1,1) node[scale=1] {$\beta^3$};
    \draw (3,1) node[scale=1] {$\alpha^2\beta$};
    \draw (0,2) node[scale=1] {$\alpha^3$};
    \draw (2,2) node[scale=1] {$\alpha\beta^2$};
    \draw (1,3) node[scale=1] {$\alpha^2\beta$};
    \draw (3,3) node[scale=1] {$\alpha^2\beta$};
    
    \draw (0,0.3) -- (0,1.7); 
    \draw (0.5,0) -- (1.5,0); 
    \draw (2,0.3) -- (2,1.7); 
    \draw (0.4,2) -- (1.5,2); 
    
    \draw (1,1.3) -- (1,2.7); 
    \draw (1.3,1) -- (2.5,1); 
    \draw (3,1.3) -- (3,2.7); 
    \draw (1.5,3) -- (2.5,3);
    
    \draw (0.3,0.3) -- (0.7,0.7); 
    \draw (0.3,2.3) -- (0.7,2.7); 
    \draw (2.3,0.3) -- (2.7,0.7); 
    \draw (2.3,2.3) -- (2.7,2.7); 
    \end{tikzpicture} 
    \caption{\textbf{Total tensor of the Markov chain with three nodes.} Representation of the total network of a binary Markov chain with three nodes $a, b, c$ and parameters $\alpha, \beta\in \mathbb{C}$. A possible reading of the values of the  total tensor is the following:  $N_{000}$ means that $a$ casts $0$ ($\alpha$), then $b$ obeys and casts $0$ ($\alpha$ again), and $c$ obeys and takes the $0$ state (another $\alpha$), so $N_{000}=\alpha^3$; $N_{011}$ means that $a$ casts $0$ ($\alpha$), then $b$ disobeys and casts $1$ ($\beta$), while $c$, which receives  $1$, obeys and takes the $1$ state (another $\alpha$), so $N_{011}=\alpha^2\beta$; and so on.}
    \label{fig:total_tensor_markov}
\end{figure}
\end{example}

\begin{example}
Consider the network with three nodes $(a, b, c)$ (in the unique possible ordering) associated to the graph in Fig.\ref{fig:triangle_network}. We endow the nodes with the following activation tensors: the vector $(\alpha,\beta)$ for the source $a$, the Jukes-Cantor matrix $B = \begin{pmatrix} \alpha & \beta \\ \beta & \alpha \end{pmatrix}$ for $b$ and the quantum-threshold of value one tensor $C$ of Example~\ref{qt1} for $c$.
\begin{figure}[!htt]
    \centering
    \begin{tikzpicture}[
            > = stealth, 
            shorten > = 1pt, 
            auto,
            node distance = 3cm, 
            semithick 
        ]
        \tikzstyle{every state}=[
            draw = black,
            thick,
            fill = white,
            minimum size = 12mm
        ]
        \node[state] (1) {$a$};
        \node[state] (2) [below left of=1] {$b$};
        \node[state] (3) [below right of=1] {$c$}; 

        \path[->] (1) edge node {} (2);
        \path[->] (1) edge node {} (3);
        \path[->] (2) edge node {} (3); 
    \end{tikzpicture}
    \caption{\textbf{Acyclic triangle with three nodes.} Tensor network with three nodes $a, b, c$ and their associated activation tensors $A, B, C$. Node $a$ transmits a binary signal to node $b$ and node $c$. In turn, node $b$ sends a signal to node $c$.}
    \label{fig:triangle_network}
\end{figure}
\smallskip

For node $a = b_1$, which is the source, $A = (\alpha, \beta)$, $\textit{in-deg}(a) = 0$ and $P_1 = ()$. 
For node $b = b_2$, we have $\textit{in-deg}(b) = 1$ and $P_2 = (i_1)$.
For node $c = b_3$, we have $\textit{in-deg}(c) = 2$ and $P_3 = (i_1, i_2)$.

\smallskip

Then, the total tensor $N$ of the network, according to Equation~\eqref{eq:total_tensor} can be computed as
\begin{equation*}
N_{i_1, i_2, i_3} = A_{i_1} B_{i_1, i_2} C_{i_1, i_2, i_3},
\end{equation*}
with computations made explicit in~\Cref{fig:total_triangle_network}.

\begin{figure}[!htt]
    \centering
    \begin{tikzpicture}[scale=0.75,baseline={([yshift=-.5ex]current bounding box.center)}] 
    
    \draw (0,0) node[scale=1] {$\beta^3$};
    \draw (2,0) node[scale=1] {$\alpha\beta^2$};
    \draw (1,1) node[scale=1] {$\alpha\beta^2$};
    \draw (3,1) node[scale=1] {$\alpha^2\beta$};
    \draw (0,2) node[scale=1] {$\alpha^3$};
    \draw (2,2) node[scale=1] {$\alpha\beta^2$};
    \draw (1,3) node[scale=1] {$\alpha^2\beta$};
    \draw (3,3) node[scale=1] {$\alpha^2\beta$};
    
    \draw (0,0.3) -- (0,1.7); 
    \draw (0.3,0) -- (1.5,0); 
    \draw (2,0.3) -- (2,1.7); 
    \draw (0.3,2) -- (1.5,2); 
    
    \draw (1,1.3) -- (1,2.7); 
    \draw (1.4,1) -- (2.5,1); 
    \draw (3,1.3) -- (3,2.7); 
    \draw (1.5,3) -- (2.5,3);
    
    \draw (0.3,0.3) -- (0.7,0.7); 
    \draw (0.3,2.3) -- (0.7,2.7); 
    \draw (2.3,0.3) -- (2.7,0.7); 
    \draw (2.3,2.3) -- (2.7,2.7); 
    \end{tikzpicture}
    \caption{\textbf{Total tensor of the acyclic triangle.} Representation of the total network of a binary acyclic triangle with nodes $a, b, c$ and parameters $\alpha, \beta\in \mathbb{C}$.}
    \label{fig:total_triangle_network}
    \end{figure}

\end{example}

\section{The Batthacharya-Mesner product}\label{sec:bm_product}

\subsection{Batthacharya-Mesner Product}
The Batthacharya-Mesner Product of tensors is an $n$-ary operation in the  space of order-$n$ cubical tensors which generalizes the row-by-column product of matrices. It is defined for tensors with coefficients in any field $\K$ (and even when $\K$ is just a ring), although we will apply it mainly to complex tensors.

\begin{definition} \label{BMP}
    Let $T_1, T_2, \dots, T_d$ be order-$d$ tensors of sizes $n_1 \times l \times n_3 \times \dots \times n_d$, $n_1 \times n_2 \times l \times \dots \times n_d$, $\cdots, l \times n_2 \times n_3 \times \dots \times n_d $, respectively. The \textit{generalized Bhattacharya-Mesner Product}  (shortly, the \textit{BMP}), denoted as $\circ (T_1, T_2, \dots, T_d) $ (see~\cite{G}), is a $d$-ary operation whose result is a tensor $T$ of size $n_1 \times n_2 \times \cdots \times n_d$ such that:
    \begin{equation*}
        (T)_{i_1\dots i_d} = \sum \limits_{h=0}^{l-1} (T_d)_{h i_2 \dots i_d} (T_1)_{i_1 h \dots i_d} \cdots (T_{d-1})_{i_1 i_2 \dots h}.
    \end{equation*}
\end{definition}

\begin{remark}
    The BMP generalizes the classical matrix product when $d=2$. Indeed, if we consider two matrices $A_1 \in \mathbb{K}^{n_1 \times l}$, $A_2 \in \mathbb{K}^{l \times n_2}$, their product is defined as 
    \begin{equation*}
        (A_1 A_2)_{ij} = \sum \limits_{h=0}^{l-1} (A_1)_{ih} (A_2)_{hj} = \sum \limits_{h=0}^{l-1} (A_2)_{hj} (A_1)_{ih},
    \end{equation*}
    which corresponds to the BMP when the order is equal to $2$ and clarifies the inversion of the tensors in the product.
    \end{remark}

\begin{example} Let us give an example of the product of three $2\times 2\times 2$ tensors.
\begin{center}
\begin{equation*}
\begin{tikzpicture}[scale=0.7,baseline={([yshift=-.5ex]current bounding box.center)}]  
\draw (-1 ,1.5) node[scale=1.3] {$\circ$};
\draw (-0.5 ,1.5) node[scale=2] {$($};
\draw (0,0) node[scale=0.9] {$5$};
\draw (2,0) node[scale=0.9] {$6$};
\draw (1,1) node[scale=0.9] {$7$};
\draw (3,1) node[scale=0.9] {$8$};
\draw (0,2) node[scale=0.9] {$1$};
\draw (2,2) node[scale=0.9] {$2$};
\draw (1,3) node[scale=0.9] {$3$};
\draw (3,3) node[scale=0.9] {$4$};
\draw (0,0.3) -- (0,1.7); 
\draw (0.3,0) -- (1.7,0); 
\draw (2,0.3) -- (2,1.7); 
\draw (0.3,2) -- (1.7,2); 
\draw (1,1.3) -- (1,2.7); 
\draw (1.3,1) -- (2.7,1); 
\draw (3,1.3) -- (3,2.7); 
\draw (1.3,3) -- (2.7,3);
\draw (0.3,0.3) -- (0.7,0.7); 
\draw (0.3,2.3) -- (0.7,2.7); 
\draw (2.3,0.3) -- (2.7,0.7); 
\draw (2.3,2.3) -- (2.7,2.7); 

\draw (3.5 ,1) node[scale=2] {$,$};

\draw (4,0) node[scale=0.9] {$13$};
\draw (6,0) node[scale=0.9] {$14$};
\draw (5,1) node[scale=0.9] {$15$};
\draw (7,1) node[scale=0.9] {$16$};
\draw (4,2) node[scale=0.9] {$9$};
\draw (6,2) node[scale=0.9] {$10$};
\draw (5,3) node[scale=0.9] {$11$};
\draw (7,3) node[scale=0.9] {$12$};
\draw (4,0.3) -- (4,1.7); 
\draw (4.3,0) -- (5.7,0); 
\draw (6,0.3) -- (6,1.7); 
\draw (4.3,2) -- (5.7,2);
\draw (5,1.3) -- (5,2.7); 
\draw (5.3,1) -- (6.7,1); 
\draw (7,1.3) -- (7,2.7); 
\draw (5.3,3) -- (6.7,3);
\draw (4.3,0.3) -- (4.7,0.7); 
\draw (4.3,2.3) -- (4.7,2.7); 
\draw (6.3,0.3) -- (6.7,0.7); 
\draw (6.3,2.3) -- (6.7,2.7); 

\draw (7.5 ,1) node[scale=2] {$,$};

\draw (8,0) node[scale=0.9] {$21$};
\draw (10,0) node[scale=0.9] {$22$};
\draw (9,1) node[scale=0.9] {$23$};
\draw (11,1) node[scale=0.9] {$24$};
\draw (8,2) node[scale=0.9] {$17$};
\draw (10,2) node[scale=0.9] {$18$};
\draw (9,3) node[scale=0.9] {$19$};
\draw (11,3) node[scale=0.9] {$20$};
\draw (8,0.3) -- (8,1.7); 
\draw (8.3,0) -- (9.7,0); 
\draw (10,0.3) -- (10,1.7); 
\draw (8.3,2) -- (9.7,2);
\draw (9,1.3) -- (9,2.7); 
\draw (9.3,1) -- (10.7,1); 
\draw (11,1.3) -- (11,2.7); 
\draw (9.3,3) -- (10.7,3);
\draw (8.3,0.3) -- (8.7,0.7); 
\draw (8.3,2.3) -- (8.7,2.7); 
\draw (10.3,0.3) -- (10.7,0.7); 
\draw (10.3,2.3) -- (10.7,2.7); 
\draw (11.7 ,1.5) node[scale=2] {$)$};
\draw (12.5,1.5) node[scale=1] {$=$};

\draw (14,0) node[scale=0.7] {$2995$};
\draw (16,0) node[scale=0.7] {$3372$};
\draw (15,1) node[scale=0.7] {$4489$};
\draw (17,1) node[scale=0.7] {$5032$};
\draw (14,2) node[scale=0.7] {$615$};
\draw (16,2) node[scale=0.7] {$708$};
\draw (15,3) node[scale=0.7] {$1525$};
\draw (17,3) node[scale=0.7] {$1752$};
\draw (14,0.3) -- (14,1.7); 
\draw (14.5,0) -- (15.5,0); 
\draw (16,0.3) -- (16,1.7); 
\draw (14.5,2) -- (15.5,2);
\draw (15,1.3) -- (15,2.7); 
\draw (15.5,1) -- (16.5,1); 
\draw (17,1.3) -- (17,2.7); 
\draw (15.5,3) -- (16.5,3);
\draw (14.3,0.3) -- (14.7,0.7); 
\draw (14.3,2.3) -- (14.7,2.7); 
\draw (16.3,0.3) -- (16.7,0.7); 
\draw (16.3,2.3) -- (16.7,2.7); 

\end{tikzpicture}
\end{equation*}
\end{center}
\end{example}

\smallskip

\subsection{Some algebraic properties of BMP}

It is almost immediate to realize that the BMP is non-commutative. The BMP is also non-associative, in the sense that e.g. for general tensors $A,B,C,D,E$ of order $3$
$$ \circ(\circ(A,B,C),D,E) \neq \circ(A,B,\circ(C,D,E)).$$
Yet, the BMP is multilinear, and satisfies some interesting properties.

\vspace{3mm}
{\bf Multiple identities}.
We keep our comparison with matrices ($=$ order-$2$ tensors) to show how their behavior is generalized in case of order-$d$ tensors, with $d>2$. Indeed, in the case $d = 2$, for any matrix $A$ of size $m \times n$, there exist identity matrices $I_m$ and $I_n$ of size $m \times m$ and $n \times n$, respectively, such that $AI_n=A$ and $I_mA=A$. The role of the identity matrix can be generalized for order-$n$ tensors by introducing \textit{multi-identities}.

\begin{definition} For all $j,k= 1,\dots,d$ with $j<k$, define the order-$d$ \emph{identitary} tensors $I^{j \ k}$ by
$$ (I^{j \ k})_{i_1,\dots,i_d}=\begin{cases} 1 & \text{ if } i_j=i_k; \\ 0 &\text{ else.} \end{cases}$$
\end{definition}

Notice that when $d=2$ we have only one of these tensors, namely $I^{1 \ 2}$, which corresponds to the identity matrix.
\smallskip

\begin{example}\label{ex:multi} In the case of $2 \times 2 \times 2$ tensors, the order-$3$ identitary tensors are the following:
\begin{center}
\begin{equation}
\begin{tikzpicture}[scale=0.75,baseline={([yshift=-.5ex]current bounding box.center)}] 
\draw (-1,1.3) node[scale=0.9] {$I^{1 \ 2}=$};
\draw (0,0) node[scale=0.9] {$0$};
\draw (2,0) node[scale=0.9] {$1$};
\draw (1,1) node[scale=0.9] {$0$};
\draw (3,1) node[scale=0.9] {$1$};
\draw (0,2) node[scale=0.9] {$1$};
\draw (2,2) node[scale=0.9] {$0$};
\draw (1,3) node[scale=0.9] {$1$};
\draw (3,3) node[scale=0.9] {$0$};
\draw (0,0.3) -- (0,1.7); 
\draw (0.3,0) -- (1.7,0); 
\draw (2,0.3) -- (2,1.7); 
\draw (0.3,2) -- (1.7,2);
\draw (1,1.3) -- (1,2.7); 
\draw (1.3,1) -- (2.7,1); 
\draw (3,1.3) -- (3,2.7); 
\draw (1.3,3) -- (2.7,3);
\draw (0.3,0.3) -- (0.7,0.7); 
\draw (0.3,2.3) -- (0.7,2.7); 
\draw (2.3,0.3) -- (2.7,0.7); 
\draw (2.3,2.3) -- (2.7,2.7);

\draw (4,1.3) node[scale=0.9] {$,\  I^{1 \ 3}=$};
\draw (5,0) node[scale=0.9] {$0$};
\draw (7,0) node[scale=0.9] {$0$};
\draw (6,1) node[scale=0.9] {$1$};
\draw (8,1) node[scale=0.9] {$1$};
\draw (5,2) node[scale=0.9] {$1$};
\draw (7,2) node[scale=0.9] {$1$};
\draw (6,3) node[scale=0.9] {$0$};
\draw (8,3) node[scale=0.9] {$0$};
\draw (5,0.3) -- (5,1.7); 
\draw (5.3,0) -- (6.7,0); 
\draw (7,0.3) -- (7,1.7); 
\draw (5.3,2) -- (6.7,2);
\draw (6,1.3) -- (6,2.7); 
\draw (6.3,1) -- (7.7,1); 
\draw (8,1.3) -- (8,2.7); 
\draw (6.3,3) -- (7.7,3);
\draw (5.3,0.3) -- (5.7,0.7); 
\draw (5.3,2.3) -- (5.7,2.7); 
\draw (7.3,0.3) -- (7.7,0.7); 
\draw (7.3,2.3) -- (7.7,2.7);

\end{tikzpicture}.
\end{equation}
\end{center}
\end{example}

The role of identitary tensors come out from the following.

\begin{proposition} \label{prop:multi}
For all tensors $A$ of order $d$ and for any $j=1,\dots,d$, it holds that the BMP
$$ \circ (I^{1 \ 2}, \dots , I^{1 \ d}, A) =A \quad $$
and, for $j > 1$,
$$ \circ (I^{2 \ j}, \dots , I^{j-1\ j},  A , I^{j\ j+1}, \dots, I^{j \ d},  I^{1 \ j}) =A.$$

\end{proposition}
\begin{proof} It is a simple computation. Call $T$ the product. Then, for $j > 1$:
\begin{align*} T_{i_1,\dots,i_d} = \sum_{h=0}^{l-1} (I^{1 \ j})_{h,i_2,\dots, i_d}\cdots (I^{j-1\ j})_{i_1,\dots,h,\dots, i_d} \cdot \\ \cdot 
A_{i_1,\dots ,h,\dots, i_d}\cdots (I^{j \ j+1})_{i_1,\dots,h,\dots, i_d} \cdots (I^{j \ d})_{i_1, \dots i_{d-1},h}.   \end{align*}

The first factor in any summand $ (I^{1 \ j})_{l,i_2,\dots, i_d}$  implies that only the summand with $l=i_j$ is different from 0. Since
$$(I^{1 \ j})_{i_j,i_2,\dots, i_d}= \dots = (I^{j \ d})_{i_1,\dots, i_{d-1},i_j} = 1,$$
then $ T_{i_1,\dots,i_d}=A_{i_1,\dots,i_d}$.

When $j=1$, the same conclusion holds by looking at the last factor of each summand.
\end{proof}

According to~\Cref{prop:multi}, it is easy to show that the order-$3$ identitary tensors of~\Cref{ex:multi} satisfy $\circ(I^{1 \ 2}, I^{\, 3}, A) = A$ for each $2 \times 2 \times 2$ tensor $A$.

\vspace{3mm}
{\bf Transposition.}
In the case of order-$2$ tensors, the transpose of the product of two matrices is equal to the product of the transpose of the second matrix and the transpose of the first matrix, i.e. for matrices $A,B$ we have $(AB)^T = B^TA^T$. Again, the behavior of the matrix transposition can be generalized for order-$d$ tensors.

\begin{definition}
    Let $A$ be an order-$d$ tensor. A \textit{$\sigma$-transpose of $A$} is an order-$d$ tensor $A^{\sigma}$ obtained by permuting with $\sigma$ the $d$ indices of $A$.
\end{definition}

In case of matrices we have $2$ indices subject to $2$ permutations, obtaining a unique transpose of a matrix; in case of order-$d$ tensors we have $d$ indices subject to $d!$ permutations, obtaining $(d!-1)$ $\sigma$-transposes or an order-$d$ tensor.

From the definition of the BMP, it is immediate to prove the following.

\begin{proposition}
    Let $A_1, \dots, A_d$ be tensors of order $d$ and, given $\sigma$ a permutation of $d$ indices, call $A_1^{\sigma}, \dots, A_d^{\sigma}$ their $\sigma$-transposes. Then
    \begin{equation*}
         \circ (A_1, \dots , A_d)^{\sigma} =   \circ( A_{\sigma(1)}^{\sigma}, \dots , A_{\sigma(d)}^{\sigma}).
    \end{equation*}
\end{proposition}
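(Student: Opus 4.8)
The plan is to prove the identity by a direct entrywise comparison. Fix a multi-index $(i_1,\dots,i_d)$. Unwinding the left-hand side, the definition of the $\sigma$-transpose says that $\bigl(\circ(A_1,\dots,A_d)^{\sigma}\bigr)_{i_1,\dots,i_d}$ equals the entry of $\circ(A_1,\dots,A_d)$ at the $\sigma$-permuted multi-index, and then Definition \ref{BMP} rewrites this as a sum over the contraction index $h$ of a product of $d$ scalars: the factor coming from $A_k$ is an entry of $A_k$ whose $d$ arguments are the coordinates $i_{\sigma(1)},\dots,i_{\sigma(d)}$ with $h$ substituted into the single slot prescribed by the (cyclic) pattern of the BMP.

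The key step is then to recognise each such factor as an entry of the matching $\sigma$-transposed tensor. Reading the definition of $A_k^{\sigma}$ backwards, the entry of $A_k$ at the $\sigma$-shuffled arguments, with $h$ in one slot, is the same scalar as an entry of $A_k^{\sigma}$ taken at a multi-index of the standard shape, namely $i_1,\dots,i_d$ in their natural positions except for one position which carries $h$. Because the product is just a product of complex numbers I may reorder its factors at will; after reordering I would match it factor by factor against the BMP expansion of $\circ(A_{\sigma(1)}^{\sigma},\dots,A_{\sigma(d)}^{\sigma})$, checking that the $j$-th factor on the right, the distinguished slot of $A_{\sigma(j)}^{\sigma}$ carrying $h$ included, coincides with exactly one of the factors produced on the left. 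As consistency checks I would specialise to $d=2$, where the statement must collapse to $(A_1A_2)^{T}=A_2^{T}A_1^{T}$, and to permutations $\sigma$ lying in the cyclic group generated by the shift underlying the BMP, where the reindexing of the arguments is immediate.

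I expect the bookkeeping of permutations to be the only real obstacle. Since the BMP is not symmetric in its arguments --- it singles out, in a cyclic fashion, which argument carries the contracted index and in which slot --- permuting the output slots by $\sigma$ cannot merely permute the slots inside each factor; it must simultaneously permute which (transposed) tensor sits in which argument position. The delicate point is to confirm that this induced permutation of the arguments is precisely $j\mapsto\sigma(j)$, with the slots of each factor permuted by $\sigma$ in a compatible way, rather than some conjugate of $\sigma$ by the cyclic shift; this is exactly where one must fix, and then use consistently, the convention for the $\sigma$-transpose (whether slot $k$ of $A^{\sigma}$ is read from slot $\sigma(k)$ or from slot $\sigma^{-1}(k)$ of $A$). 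Once those conventions are aligned the two sides agree slot by slot and the proof is complete.
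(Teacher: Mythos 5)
Your strategy --- expand both sides entrywise from the definition of the BMP and match the factors --- is the same direct computation the paper itself appeals to (the paper offers nothing beyond ``immediate from the definition'' together with a rather garbled $d=3$ example), but your write-up defers exactly the step that carries all the content, and that step does not close in the way you assert. Carry out your own bookkeeping: let $c$ be the cyclic shift of slots, $c(k)=k+1$ for $k<d$ and $c(d)=1$, so that in $\circ(T_1,\dots,T_d)$ the factor $T_k$ carries the contraction index $h$ in slot $c(k)$. On the left-hand side the factor involving $A_m$ has $h$ in slot $c(m)$; on the right-hand side the same tensor $A_m=A_{\sigma(j)}$ sits in argument position $j=\sigma^{-1}(m)$ and therefore carries $h$ in slot $\sigma^{\pm1}\bigl(c(\sigma^{-1}(m))\bigr)$, the exponent depending on which of the two conventions for the $\sigma$-transpose you fix. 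Matching factor by factor thus forces $\sigma c=c\sigma$ (respectively $\sigma c\sigma=c$): the reordering of the arguments that actually comes out of the computation is the conjugate of $\sigma^{\pm1}$ by $c$, not $\sigma$ itself, and it agrees with $\sigma$ only for special permutations --- all of $S_2$ when $d=2$, which is why your $d=2$ consistency check cannot detect the problem, but only a proper subset of $S_d$ (powers of the cycle, under the natural convention) when $d\ge3$. No choice of convention repairs this: for $d=3$ and $\sigma=(23)$, an involution, the two transpose conventions coincide, yet taking $A_1$ with single nonzero entry $(A_1)_{000}=1$ and $A_2=A_3$ the all-ones $2\times2\times2$ tensor gives $\bigl(\circ(A_1,A_2,A_3)^{\sigma}\bigr)_{i_1i_2i_3}=\delta_{i_1 0}\,\delta_{i_2 0}$, while $\circ\bigl(A_{\sigma(1)}^{\sigma},A_{\sigma(2)}^{\sigma},A_{\sigma(3)}^{\sigma}\bigr)_{i_1i_2i_3}=\delta_{i_1 0}\,\delta_{i_3 0}$; these differ at $(0,0,1)$.

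So the ``delicate point'' you flag at the end --- whether the induced permutation of the arguments is $j\mapsto\sigma(j)$ or a conjugate of $\sigma$ by the cyclic shift --- is not a matter of aligning conventions: the computation produces the conjugate, and the identity as literally stated fails for general $\sigma$ once $d\ge3$. What your argument actually proves, once completed, is $\circ(A_1,\dots,A_d)^{\sigma}=\circ\bigl(A^{\sigma}_{\tau(1)},\dots,A^{\sigma}_{\tau(d)}\bigr)$ with $\tau=c^{-1}\sigma c$ under the convention $(A^{\sigma})_{i_1,\dots,i_d}=A_{i_{\sigma^{-1}(1)},\dots,i_{\sigma^{-1}(d)}}$ (and $\tau=c^{-1}\sigma^{-1}c$ under the other convention), which reduces to the displayed statement exactly when $\sigma$ commutes with $c$ --- e.g.\ the matrix case $d=2$, or cyclic transposes. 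To have a complete proof you must either establish this corrected version or restrict $\sigma$ to the centralizer of the shift; the closing assertion that ``once those conventions are aligned the two sides agree slot by slot'' leaves the essential verification undone, and in fact that verification cannot succeed for arbitrary $\sigma$.
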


\begin{example} 
We report the computation for the case $d=3$. \\ 
Let $A,B,C$ be $3$-tensors and consider a permutation $\sigma: \{ 1,2,3 \} \rightarrow \{ 1,2,3 \}$ such that $\sigma(1) = 1,$ $\sigma(2) = 3,$  $\sigma(3) = 2$. Define $T^{\sigma} = (\circ(A,B,C))^{\sigma}$ and $T' = \circ((A,B,C)^\sigma)$. Then

\begin{equation}
     (T_{ijk})^{\sigma} = T_{ikj} = A_{i0j} B_{ik0} C_{0kj} + A_{i1j} B_{ik1} C_{1kj}, 
\end{equation}

and
\begin{equation}
T'_{ijk}= \circ((A,B,C)^\sigma)_{ijk} = \circ(A^{\sigma}, B^{\sigma}, C^{\sigma})_{ijk} = A'_{i0k} B'_{ij0} C'_{0jk} + A'_{i1k} B'_{ij1} C'_{1jk}, 
\end{equation}
with $A'_{ijk}= A_{ikj}, B'_{ijk}= B_{ikj}, C'_{ijk}= C_{ikj}.$ It follows that

\begin{equation}
   \circ(A^{\sigma}, B^{\sigma}, C^{\sigma})_{ijk} = T'_{ijk} = A_{i0j} B_{ik0} C_{0kj} + A_{i1j} B_{ik1} C_{1kj} = T_{ikj} = (\circ(A,B,C)_{ijk})^{\sigma}.
\end{equation}

\end{example}
\section{Network characterization in terms of BMP}\label{sec:bm_networks}

\subsection{Expansion of tensors}

The following operations are fundamental to transform the activation tensor of each node in a tensor which gives, via the BMP, the total tensor of the network.

We call the operations \emph{blow} and \emph{forget}. The former expands a tensor of order $d$ to a tensor of order $d+1$, while the latter expands a tensor of order $d$ to a tensor of generic order $d'>d$.

\vspace{3mm}
\paragraph{\bf The \textit{blow} operation}

\begin{definition}\label{def:blow}
Let $T\in V^{\otimes d}$. The \emph{blow}, or \emph{inflation}, of $T$ is the tensor $b(T)$ of order $d+1$ defined by

\begin{equation}
(b(T))_{i_1, \dots, i_{d+1}} = \bigg \{
\begin{array}{ll}
T_{i_1, \dots, i_d}& \text{if} \ i_1 = i_{d+1}; \\
0 & \text{else.} \\
\end{array}
\end{equation}
\end{definition}

We observe that our definition of inflation (blow) extends the notion introduced in~\cite[Section 2.4]{G}.

\begin{example} The blow $b(M)$ of a matrix $M=\begin{pmatrix} a & b \\ c & d \end{pmatrix}$ giving an order-$3$ tensor is the following:

\begin{center}
\begin{equation}
\begin{tikzpicture}[scale=0.75,baseline={([yshift=-.5ex]current bounding box.center)}] 

\draw(-1.2,1.3) node[scale=1] {$b(M) =$};
\draw (0,0) node[scale=1] {$0$};
\draw (2,0) node[scale=1] {$0$};
\draw (1,1) node[scale=1] {$c$};
\draw (3,1) node[scale=1] {$d$};
\draw (0,2) node[scale=1] {$a$};
\draw (2,2) node[scale=1] {$b$};
\draw (1,3) node[scale=1] {$0$};
\draw (3,3) node[scale=1] {$0$};

\draw (0,0.3) -- (0,1.7); 
\draw (0.3,0) -- (1.7,0); 
\draw (2,0.3) -- (2,1.7); 
\draw (0.3,2) -- (1.7,2); 

\draw (1,1.3) -- (1,2.7); 
\draw (1.3,1) -- (2.7,1); 
\draw (3,1.3) -- (3,2.7); 
\draw (1.3,3) -- (2.7,3);

\draw (0.3,0.3) -- (0.7,0.7); 
\draw (0.3,2.3) -- (0.7,2.7); 
\draw (2.3,0.3) -- (2.7,0.7); 
\draw (2.3,2.3) -- (2.7,2.7); 
\end{tikzpicture}
\end{equation}
\end{center}
\end{example}

Notice that the blow is defined only in coordinates. 

\begin{remark} If $T=v_1\otimes\cdots\otimes v_d$ is a tensor of rank $1$, then its blow (in general) is no longer of rank $1$. In term of a basis $\{e_1,\dots,e_n\}$ of $V$, for which $v_1=a_1e_1+\dots+a_ne_n$, the blow $b(T)$ is the tensor

\begin{equation*} 
        b(T)= \sum_{i = 1}^n a_ie_i\otimes v_2\otimes \cdots\otimes v_d\otimes e_i.
\end{equation*}
\end{remark}

\vspace{3mm}
{\bf The \textit{forget} operation.}

\begin{definition}\label{def:forget}
Let $T\in V^{\otimes d}$. For $d'>d$ let $J\subset \{1,\dots, d' \}$, $|J| = d'-d$.
The $J$-th \emph{forget}, or $J$-th \emph{copy} of $T$ is the tensor $f_J(T)$ of order $d'$ defined by

\begin{equation}
(f_J(T))_{i_1, \dots, i_{d'}} = T_{j_1, \dots,j_{d}}
\end{equation}      
where $(j_1, \dots,j_{d})$ is the list obtained from $(i_1, \dots, i_{d'})$ by erasing the element $i_k$ for all $k\in J$.
\end{definition}

For instance, the forget operation $f_{\{k\}}(T)$ creates a tensor whose entry of indexes $i_1,\dots,i_{d+1}$ corresponds to the entry of $T$ where $i_h = j_h$ for $h=1, \dots, k-1$, $i_h = j_{h-1}$ for $h=k+1, \dots , d+1$.

\begin{example}The forget $f_{\{2\}}(M)$ of a matrix $M=\begin{pmatrix} a & b \\ c & d \end{pmatrix}$ giving an order-$3$ tensor is the following:

\begin{center}
\begin{equation}
\begin{tikzpicture}[scale=0.75,baseline={([yshift=-.5ex]current bounding box.center)}] 

\draw(-1.5,1.3) node[scale=1] {$f_{\{2\}}M =$};
\draw (0,0) node[scale=1] {$c$};
\draw (2,0) node[scale=1] {$c$};
\draw (1,1) node[scale=1] {$d$};
\draw (3,1) node[scale=1] {$d$};
\draw (0,2) node[scale=1] {$a$};
\draw (2,2) node[scale=1] {$a$};
\draw (1,3) node[scale=1] {$b$};
\draw (3,3) node[scale=1] {$b$};

\draw (0,0.3) -- (0,1.7); 
\draw (0.3,0) -- (1.7,0); 
\draw (2,0.3) -- (2,1.7); 
\draw (0.3,2) -- (1.7,2); 

\draw (1,1.3) -- (1,2.7); 
\draw (1.3,1) -- (2.7,1); 
\draw (3,1.3) -- (3,2.7); 
\draw (1.3,3) -- (2.7,3);

\draw (0.3,0.3) -- (0.7,0.7); 
\draw (0.3,2.3) -- (0.7,2.7); 
\draw (2.3,0.3) -- (2.7,0.7); 
\draw (2.3,2.3) -- (2.7,2.7); 
\end{tikzpicture}
\end{equation}
\end{center}
\end{example}

In this example, $(f_{\{2\}}(M))_{i_1, i_3} = M_{j_1, j_2}$ where $i_1=j_1$, $i_3 = j_2 $. Indeed, $(f_{\{2\}}(M))_{000} = (f_{\{2\}}(M))_{010} = M_{00} = a$.

\begin{remark} If $T=v_1\otimes\cdots\otimes v_d$ is a tensor of rank $1$, then its $J$-th forget tensor $f_J(T)$ is again a tensor of rank $1$. Indeed, in term of a basis $\{e_1,\dots,e_n\}$ of $V$, for which $v_1=a_1e_1+\dots+a_ne_n$,
\begin{equation}
f_k(T)= w_1\otimes\cdots\otimes w_{d'},
\end{equation} 
where $w_i=v_i$ if $i\notin J$ while $w_i=e_1+\cdots+e_n$ if $i\in J$.
\end{remark}

Notice that when $J=\emptyset$, the forget tensor $f_J(T)$ is equal to $T$.

\subsection{The Theorem}

Now we are ready to explain how one gets the total tensor of a network by multiplying with the BMP certain tensors  obtained from the activation tensors of the network.
\medskip

Assume we are given a network $\Nc(G, \mathcal{A})$ with $\left | V(G) \right | = q$ nodes and total ordering $(a=a_1, a_2,\dots, a_q=b)$. We call $a$ the \emph{source} and $b$ the \emph{sink} of the network.

For any node $a_i \in V(G)$, we define $P_i$ as the set of indexes of the parents of $a_i$ (see~\Cref{def:total}), so that $P_i\subset\{1,\dots,i-1\}$. Let $p_i$ be the cardinality of $P_i$ and $T_i$ be the activation tensor of the node $a_i$. According to~\Cref{def:activation}, tensor $T_i$ has order $p_i+1$.

\begin{definition}\label{proc} For any node $a_i \in V(G)$, we define the associated order-$q$ tensor $T_i$ with the following procedure.

{\it Step 1.} Start with the activation tensor $A_i$.

{\it Step 2.} If $i>1$, consider the $i$-dimensional forget tensor $A'_i=f_J(A_i)$, where $J=\{1,\dots,i-1\}\setminus P_i$. 

{\it Step 3.} If $i\neq q$, consider $A''_i=b(A'_i)$.

{\it Step 4.} Define $T_i$ as the order-$q$ forget tensor $T_i=f_H(A''_i)$, where $H=\{i+2,\dots,q\}.$ 
\end{definition}

The listed rules ensure that each activation tensor is expanded to order $q$ at the end of the procedure in~\Cref{proc}. A more detailed explanation follows:
\begin{itemize}
    \item the order of tensor $T_i$ depends on the number of parents $P_i$ of node $i$: the starting order of the tensor is $P_i + 1 \leq i$;
    \item $|J|$ has cardinality $i-1 - P_i$, therefore, applying $|J|$ forgets, we obtain a tensor of order $i$;
    \item if $i<q$, applying the blow we obtain a tensor of order $i+1$; 
    \item if $i+1<q$, we need to apply forgets from $i+2$ to $q$ to obtain a tensor of order $q$.
\end{itemize}

\begin{theorem}\label{main}

Let $\Nc(G, \mathcal{A})$ be a network with $\left | V(G) \right | = q$ nodes and total ordering $(a=a_1, a_2,\dots, a_q=b)$, such that $a$ is the source and $b$ is the sink. Moreover, assume that $T_1, \dots, T_q$ are the tensors obtained by applying procedure in~\Cref{proc} to the activation tensors $A_1, \dots, A_q \in \mathcal{A}$.
The BMP 
\begin{equation}
\circ (T_1, \dots, T_{q})
\end{equation}
is equal to the total tensor $N$ of the network.  
\end{theorem}

Before we provide the proof of the main theorem, let us illustrate the procedure with some examples.

\begin{example}\label{basec} Consider the basic case of a network in which there are only two nodes, $a$ the source and $b$ the sink.
The activation tensor of $a$ is just a vector $A_1=(\alpha_0,\alpha_1,\dots,\alpha_{n-1})$. The activation tensor of $b$ depends on the existence of an arrow from $a$ to $b$.

Assume the arrow $a\to b$ exists. Then the activation tensor $A_2 = (a_{ij})$ of $b$ is a matrix of type $n\times n$ which describes the reaction of $b$ to the input from $a$. 
If we apply the procedure of Definition~\ref{proc} to $A_1$, the final result is the tensor $A'_1=b (A_1)$, since $J=H=\emptyset$. By definition of blow, $A'_1$ is the diagonal matrix with the vector $A_1$ in the diagonal. Set $T_1 = A'_1$.
If we apply the procedure of Definition~\ref{proc} to $A_2$, since again $J=H=\emptyset$, we obtain the matrix $T_2 = A_2$ itself. Thus, the BMP $\circ (T_1, T_2)$, which corresponds to the usual matrix multiplication $T_1 \cdot T_2$, produces the matrix $N$ with $N_{ij}=(A_1)_i(A_2)_{i,j}$, which corresponds to the fact that $a$ casts the signal $i$ and $b$ reacts with $j$. In other words, $N$ is the tensor of the network. 

Assume the arrow $a\to b$ does not exist. Then the activation tensor $A_2$ of $b$ is a vector $A_2= (\alpha_0,\dots,\alpha_{n-1})$.
As above, if we apply the procedure of Definition~\ref{proc} to $A_1$, the final result is the tensor $A'_1=b (A_1)$, i.e. the diagonal matrix with the vector $A_1$ in the diagonal. Set $T_1 = A'_1$.
If we apply the procedure of Definition~\ref{proc} to $A_2$, we obtain the forget tensor $A'_2 =f_{\{1\}}(A_2)$, which is an $n\times n$ matrix whose rows are all equal to $A_2$, and set $T_2 = A'_2$. 
Thus, the BMP $\circ (T_1, T_2)$, which corresponds to the matrix multiplication $T_1 \cdot T_2$, produces the matrix $N$ with $N_{ij}=(A_1)_i(A_2)_{j}$, meaning that $a$ and $b$ are totally independent. Again, $N$ is clearly the tensor of the network. 

\end{example}

\begin{example} Consider the Markov network of Fig.\ref{fig:Markov_network}, and let us associate to $a$ the activation tensor $A_1=(\alpha,\beta)$, while the activation tensors $A_2, A_3$ of the nodes $b$ and $c$, respectively, are both equal to the Jukes- Cantor matrix $A = \begin{pmatrix}
    \alpha & \beta \\ \beta & \alpha
\end{pmatrix}$ of Example~\ref{qt1}. 
The total tensor of the network is thus given by the BMP of the tensors $T_1, T_2, T_3$ where $T_1=f_{\{3\}}(b(A_1))$,
$T_2 = b(A_2)$ and $T_3 = f_{\{1\}}(A_3)$. One computes 

\begin{center}
\begin{equation}
\begin{tikzpicture}[scale=0.75,baseline={([yshift=-.5ex]current bounding box.center)}] 
\draw (-1,1.3) node[scale=0.9] {$T_1=$};
\draw (0,0) node[scale=0.9] {$0$};
\draw (2,0) node[scale=0.9] {$\beta$};
\draw (1,1) node[scale=0.9] {$0$};
\draw (3,1) node[scale=0.9] {$\beta$};
\draw (0,2) node[scale=0.9] {$\alpha$};
\draw (2,2) node[scale=0.9] {$0$};
\draw (1,3) node[scale=0.9] {$\alpha$};
\draw (3,3) node[scale=0.9] {$0$};
\draw (0,0.3) -- (0,1.7); 
\draw (0.3,0) -- (1.7,0); 
\draw (2,0.3) -- (2,1.7); 
\draw (0.3,2) -- (1.7,2);
\draw (1,1.3) -- (1,2.7); 
\draw (1.3,1) -- (2.7,1); 
\draw (3,1.3) -- (3,2.7); 
\draw (1.3,3) -- (2.7,3);
\draw (0.3,0.3) -- (0.7,0.7); 
\draw (0.3,2.3) -- (0.7,2.7); 
\draw (2.3,0.3) -- (2.7,0.7); 
\draw (2.3,2.3) -- (2.7,2.7);

\draw (4,1.3) node[scale=0.9] {$,\ \ T_2=$};
\draw (5,0) node[scale=0.9] {$0$};
\draw (7,0) node[scale=0.9] {$0$};
\draw (6,1) node[scale=0.9] {$\beta$};
\draw (8,1) node[scale=0.9] {$\alpha$};
\draw (5,2) node[scale=0.9] {$\alpha$};
\draw (7,2) node[scale=0.9] {$\beta$};
\draw (6,3) node[scale=0.9] {$0$};
\draw (8,3) node[scale=0.9] {$0$};
\draw (5,0.3) -- (5,1.7); 
\draw (5.3,0) -- (6.7,0); 
\draw (7,0.3) -- (7,1.7); 
\draw (5.3,2) -- (6.7,2);
\draw (6,1.3) -- (6,2.7); 
\draw (6.3,1) -- (7.7,1); 
\draw (8,1.3) -- (8,2.7); 
\draw (6.3,3) -- (7.7,3);
\draw (5.3,0.3) -- (5.7,0.7); 
\draw (5.3,2.3) -- (5.7,2.7); 
\draw (7.3,0.3) -- (7.7,0.7); 
\draw (7.3,2.3) -- (7.7,2.7); 

\draw (9,1.3) node[scale=0.9] {$,\ \ T_3=$};
\draw (10,0) node[scale=0.9] {$\alpha$};
\draw (12,0) node[scale=0.9] {$\beta$};
\draw (11,1) node[scale=0.9] {$\beta$};
\draw (13,1) node[scale=0.9] {$\alpha$};
\draw (10,2) node[scale=0.9] {$\alpha$};
\draw (12,2) node[scale=0.9] {$\beta$};
\draw (11,3) node[scale=0.9] {$\beta$};
\draw (13,3) node[scale=0.9] {$\alpha$};
\draw (10,0.3) -- (10,1.7); 
\draw (10.3,0) -- (11.7,0); 
\draw (12,0.3) -- (12,1.7); 
\draw (10.3,2) -- (11.7,2); 
\draw (11,1.3) -- (11,2.7); 
\draw (11.3,1) -- (12.7,1); 
\draw (13,1.3) -- (13,2.7); 
\draw (11.3,3) -- (12.7,3);
\draw (10.3,0.3) -- (10.7,0.7); 
\draw (10.3,2.3) -- (10.7,2.7); 
\draw (12.3,0.3) -- (12.7,0.7); 
\draw (12.3,2.3) -- (12.7,2.7); 

\end{tikzpicture},
\end{equation}
\end{center}
and the product $\circ(T_1, T_2, T_3)$ gives
\begin{center}
\begin{equation}
\begin{tikzpicture}[scale=0.75,baseline={([yshift=-.5ex]current bounding box.center)}] 
\draw (-1,1.3) node[scale=0.9] {$N= $};
\draw (0,0) node[scale=0.9] {$\alpha\beta^2$};
\draw (2,0) node[scale=0.9] {$\alpha\beta^2$};
\draw (1,1) node[scale=0.9] {$\beta^3$};
\draw (3,1) node[scale=0.9] {$\alpha^2\beta$};
\draw (0,2) node[scale=0.9] {$\alpha^3$};
\draw (2,2) node[scale=0.9] {$\alpha\beta^2$};
\draw (1,3) node[scale=0.9] {$\alpha^2\beta$};
\draw (3,3) node[scale=0.9] {$\alpha^2\beta$};
\draw (0,0.3) -- (0,1.7); 
\draw (0.5,0) -- (1.5,0); 
\draw (2,0.3) -- (2,1.7); 
\draw (0.5,2) -- (1.5,2); 
\draw (1,1.3) -- (1,2.7); 
\draw (1.5,1) -- (2.5,1); 
\draw (3,1.3) -- (3,2.7); 
\draw (1.5,3) -- (2.5,3);
\draw (0.3,0.3) -- (0.7,0.7); 
\draw (0.3,2.3) -- (0.7,2.7); 
\draw (2.3,0.3) -- (2.7,0.7); 
\draw (2.3,2.3) -- (2.7,2.7); 
\end{tikzpicture},
\end{equation}
\end{center}
which represents a twisted cubic curve in the projective space of tensors of type $2\times 2\times 2$, sitting in the linear space of equations $N_{010}=N_{100}=N_{110}$ and $N_{001}=N_{011}=N_{111}$.

Note that both the left and the rigth faces of the tensor have rank $1$, corresponding to the fact that $c$ is independent of $a$ given $b$.
\smallskip

If we remove from the network the arrow that joins $b$ and $a$ we obtain a new network as in~\Cref{fig:network}.
\begin{figure}[!h]
\centering
\begin{tikzpicture}[
> = stealth, 
shorten > = 1pt, 
auto,
node distance = 3cm, 
semithick 
]
\tikzstyle{every state}=[
draw = black,
thick,
fill = white,
minimum size = 12mm
]
\node[state] (1) {$a$};
\node[state] (2) [right of=1] {$b$};
\node[state] (3) [right of=2] {$c$}; 

\path[->] (1) edge node {} (2);
\end{tikzpicture}
\caption{\textbf{Network with three nodes, two of which are independent.} Tensor network with three nodes $b, c, a$ and their associated activation tensors $A_1, A_2, A_3$. Node $a$ transmits a binary signal to node $b$. Nodes $a$ and $c$ are independent.}
\label{fig:network}
\end{figure}
\bigskip

Before assigning activation tensors, a total ordering of the nodes, consistent with the partial ordering, must be established. We assume the sorting $(a, b, c)$.
At this point, taking the activation tensor $A_1, A_2$ as above and $A_3=A_1$, we get that the total tensor of the network is  given by the BMP $\circ(T_1, T_2, T_3)$, where $T_1 = A'_1$ and $T_2 = A'_2$ are as above and $ T_3 = A'=f_{\{1,2\}}(A_3)$, i.e.
\begin{center}
\begin{equation}
\begin{tikzpicture}[scale=0.75,baseline={([yshift=-.5ex]current bounding box.center)}] 
\draw (-1,1.3) node[scale=0.9] {$T_3=$};
\draw (0,0) node[scale=0.9] {$\alpha$};
\draw (2,0) node[scale=0.9] {$\alpha$};
\draw (1,1) node[scale=0.9] {$\beta$};
\draw (3,1) node[scale=0.9] {$\beta$};
\draw (0,2) node[scale=0.9] {$\alpha$};
\draw (2,2) node[scale=0.9] {$\alpha$};
\draw (1,3) node[scale=0.9] {$\beta$};
\draw (3,3) node[scale=0.9] {$\beta$};
\draw (0,0.3) -- (0,1.7); 
\draw (0.3,0) -- (1.7,0); 
\draw (2,0.3) -- (2,1.7); 
\draw (0.3,2) -- (1.7,2); 
\draw (1,1.3) -- (1,2.7); 
\draw (1.3,1) -- (2.7,1); 
\draw (3,1.3) -- (3,2.7); 
\draw (1.3,3) -- (2.7,3);
\draw (0.3,0.3) -- (0.7,0.7); 
\draw (0.3,2.3) -- (0.7,2.7); 
\draw (2.3,0.3) -- (2.7,0.7); 
\draw (2.3,2.3) -- (2.7,2.7); 
\end{tikzpicture}.
\end{equation}
\end{center}

The product $\circ(T_1, T_2, T_3)$ corresponds to
\begin{center}
\begin{equation}
\begin{tikzpicture}[scale=0.75,baseline={([yshift=-.5ex]current bounding box.center)}] 
\draw (-1,1.3) node[scale=0.9] {$N=$};
\draw (0,0) node[scale=0.9] {$\alpha\beta^2$};
\draw (2,0) node[scale=0.9] {$\alpha^2\beta$};
\draw (1,1) node[scale=0.9] {$\beta^3$};
\draw (3,1) node[scale=0.9] {$\alpha\beta^2$};
\draw (0,2) node[scale=0.9] {$\alpha^3$};
\draw (2,2) node[scale=0.9] {$\alpha^2\beta$};
\draw (1,3) node[scale=0.9] {$\alpha^2\beta$};
\draw (3,3) node[scale=0.9] {$\alpha\beta^2$};
\draw (0,0.3) -- (0,1.7); 
\draw (0.5,0) -- (1.5,0); 
\draw (2,0.3) -- (2,1.7); 
\draw (0.3,2) -- (1.5,2); 
\draw (1,1.3) -- (1,2.7); 
\draw (1.3,1) -- (2.5,1); 
\draw (3,1.3) -- (3,2.7); 
\draw (1.5,3) -- (2.5,3);
\draw (0.3,0.3) -- (0.7,0.7); 
\draw (0.3,2.3) -- (0.7,2.7); 
\draw (2.3,0.3) -- (2.7,0.7); 
\draw (2.3,2.3) -- (2.7,2.7); 
\end{tikzpicture},
\end{equation}
\end{center}
which represents a different twisted cubic curve.

Note that not only both the left and right faces of the tensor have rank $1$, \emph{i.e.} $a$ is independent of $b$ given $c$, but also the upper and lower faces have rank $1$, since $a,c$ are now independent.
\end{example}

\begin{example} \label{ex:27}
It is obvious that in a network without arrows all the activation tensors of the nodes are vectors, and the tensor product of these vectors defines a rank one tensor $N$ which is the total tensor of the network.

If a $n$-ary network with no arrows has $q$ nodes $a_1, \dots, a_{q}$, with total ordering $(a_1, \dots, a_q)$, and $w_i\in \C^n$ is the activation vector of the node $a_i$, $i = 1, \dots, q$, then Theorem~\ref{main} shows that the BMP of the order-$n$ tensors $T_i$'s, obtained by the $w_i$'s with the application of the procedure in~\Cref{proc}, must be equal to $w_1\otimes\cdots\otimes w_q$.
This can be also proved directly.
Namely, we have:
$$T_1=f_{\{3,\dots,q\}}(b(w_1)),\quad T_j=f_{\{j+2,\dots,q\}}(b(f_{\{1,\dots j-1\}}(w_j))) \ \text{for} \ j = 1, \dots, q-2;$$  
$$T_{q-1} = b(f_{\{1,\dots,q-2\}}(w_{q-1})),\quad 
T_q= f_{\{1,\dots, q-1\}}(w_q).$$

Thus, assuming $w_i=(w_{i1},\dots,w_{in})$ for all $i = 1, \dots, q$, the computation of the product $N = \circ (T_1 \cdots T_{q})$ goes as follows:
$$N_{i_1,\dots,i_q} = \sum_{h=0}^{n-1}(T_1)_{i_1,h,i_3,\dots,q}\cdots (T_{q-1})_{i_1,\dots,i_{q-1},h}(T_q)_{h,i_2,\dots,i_q}.$$
Since the last operation used to obtain $T_{q-1}$ is a blow, the only summand that survives is the one with $h=i_1$. \emph{I.e.}
$$N_{i_1,\dots,i_q} = (T_1)_{i_1,i_1,i_3,\dots,i_q}\cdots (T_{q-1})_{i_1,\dots,i_{q-1},i_1} (T_q)_{i_1,i_2,\dots,i_q}.$$
From our construction, one gets $$(T_q)_{i_1,i_2,\dots,i_q}=(f_{\{1,\dots,q-1\}}(w_q))_{i_1,i_2,\dots,i_q} = w_{q i_q}.$$ 
Analogously, for $j=1,\dots,q-2$ ,
$$(T_j)_{i_1,i_2,\dots,i_q}=(f_{\{j+2,\dots,q\}}(b(f_{\{1,\dots, j-1\}}(w_j))))_{i_1,i_2,\dots,i_{j-1}, i_1,i_{j+1},\dots,i_q},$$ 
where the index $i_1$ in the middle appears in position $j$. Thus, $$(f_{\{j+2,\dots,q\}}(b(f_{\{1,\dots, j-1\}}(w_j))))_{i_1,i_2,\dots, i_{j-1},i_1,i_{j+1}\dots,i_q}= 
(b(f_{\{1,\dots, j-1\}}(w_j)))_{i_1,i_2,\dots,i_{j-1}, i_1},$$ and $$(f_{\{1,\dots, j-1\}}(w_j))_{i_1,i_2,\dots,i_{j-1}}=w_{j i_j}.$$ With the same argument, one proves that $$(T_1)_{i_1,i_1,i_3,\dots,i_q}=w_{1 i_1}, \ \text{and }(T_{q-1})_{i_1,i_2,\dots,i_{q-1},i_1}=w_{q-1\ i_{q-1}}.$$ Then 
$$ N_{i_1,\dots,i_q} = w_{1 i_1}\cdots w_{q i_q}.$$
\end{example}
\medskip

{\it Proof of Theorem~\ref{main}.} \ \
We work by induction on the number of nodes, the case $q=1$ being trivial. We could also use Example~\ref{basec} as a basis for the induction, starting with $q=2$.

Assume that the theorem holds for any network with at most $q-1$ nodes, and pick a network $\Nc(G, \mathcal{A})$ with $\left | V(G) \right | = q$, with total ordering $(a_1, \dots, a_q)$.

If we exclude the sink $a_q=b$, then we obtain a network $\Nc'(G', \mathcal{A}')$ for which the theorem holds, by induction. Use on the nodes of $G'$ the ordering defined for the nodes of $G$.

Let $N$ be the total tensor of $\Nc$ and $N'$ be the total tensor of $\Nc'$. Then, the relation between $N$ and $N'$ can be described as follows. Call $P=P_q$ the set of indexes corresponding to $parents(b)$, ordered in the ordering defined for $G$. If $B$ is the activation tensor of $b$, then 
$$ N_{i_1,\dots,i_q}= N'_{i_1,\dots,i_{q-1}}(B)_{P(i_1,\dots,i_q)}$$
where $P(i_1,\dots,i_q)$ means that we pick in the sequence $(i_1,\dots,i_q)$ only the indexes corresponding to values in $P$.

Let $T'_i$ be the tensor associated with $a_i$ constructed as in the procedure of~\Cref{proc} for the network $\Nc'$, and let $T_i$ be the tensor associated with $a_i$ obtained by the same procedure for the network $\Nc$, for $i = 1, \dots q-1$. Then $T'_i$ and $T_i$ are linked by the following relations:
$$ \begin{cases}  T_i &= f_{\{q\}}(T'_i) \quad \text{ if } i=1,\dots,q-2;\\
T_{q-1} &= b(T'_{q-1}).
\end{cases}
$$
Moreover, by applying the procedure in~\Cref{proc} to the activation tensor $B$, we obtain the tensor $T_q$ associated to node $b$:
\begin{equation}\label{eq:forget_B}
     T_q = f_J(B) \quad\text{ where } J=\{1\dots,q-1\}\setminus P.
\end{equation}
Now, we only need to perform the required calculations.

The BMP $M=\circ  ( T_1,\cdots, T_{q-1}, T_q)$ has the following structure:
$$M_{i_1,\dots,i_q}=\sum_{h=0}^{n-1} (T_1)_{i_1,h,i_3,\dots,i_q}\cdots (T_{q-1})_{i_1,\dots,i_{q-1},h}(T_q)_{h,i_2,\dots,i_q}.$$
Since $T_{q-1}$ is the blow of a tensor, the unique summand which survives in the previous expression is the one with $h=i_1$.
Thus
$$M_{i_1,\dots,i_q}= (T_1)_{i_1,i_1,i_3,\dots,i_q}\cdots (T_{q-1})_{i_1,\dots,i_{q-1},i_1}(T_q)_{i_1,i_2,\dots,i_q}.$$
By construction, $(T_q)_{i_1,i_2,\dots,i_q} = (f_J(B))_{i_1,i_2,\dots,i_q} = (B)_{P(i_1,\dots,i_q)}.$ Moreover, one gets  $(T_j)_{i_1,\dots,i_{j-1},i_1,i_{j+1},\dots,i_q}=(T'_j)_{i_1,\dots,i_{j-1},i_1,i_{j+1},\dots,i_{q-1}}$ for $j=1,\dots, q-2$, and $(T_{q-1})_{i_1,\dots,i_{q-1},i_1}=(T'_{q-1})_{i_1,\dots,i_{q-1}}$ for the last factor in the product.

By induction, we know that $N'$ is given by the BMP $ \circ (T'_1\cdots T'_{q-2} T'_{q-1})$, thus 
$$N'_{i_1,\dots,i_q}=\sum_{h=0}^{n-1} (T'_1)_{i_1,h,i_3,\dots,i_{q-1}}\cdots (T_{q-2})_{i_1,\dots,i_{q-2},h}(T'_{q-1})_{h,i_2,\dots,i_{q-1}}.$$
As already introduced in~\Cref{ex:27}, the unique summand that survives has $h=i_1$, so 
$$N'_{i_1,\dots,i_q}=(T'_{q-1})_{i_1,\dots,i_{q-1}}(T'_1)_{i_1,i_1,i_3,\dots,i_{d-1}}\cdots (T'_{q-2})_{i_1,\dots,i_{q-2},i_1}.$$
Comparing with the previous expressions, it follows that
$$N'_{i_1,\dots,i_q}=(T_1)_{i_1,i_1,i_3,\dots,i_q}\cdots (T_{q-2})_{i_1,\dots,i_{q-1},i_1}(T_{q-1})_{i_1,\dots,i_{q-1},i_1}.$$
Finally,
\begin{multline*}
N_{i_1,\dots,i_q}= N'_{i_1,\dots,i_{d-1}}(B)_{P(i_1,\dots,i_q)}=\\
= (T_1)_{i_1,i_1,i_3,\dots,i_q}\cdots (T_{q-2})_{i_1,\dots,i_{q-1},i_1} (T_{q-1})_{i_1,\dots,i_{q-1},i_1}(T_q)_{i_1,i_2,\dots,i_q}
= M_{i_1,\dots,i_q}.
\end{multline*}
The claim follows. \hfill \qed

\begin{example}  Consider the binary network in~\Cref{fig}
\begin{figure}[ht]
\centering
\begin{tikzpicture}[scale=0.75,baseline={([yshift=-.5ex]current bounding box.center)}][
> = stealth, 
shorten > = 1pt, 
auto,
node distance = 3cm, 
semithick 
]
\tikzstyle{every state}=[
draw = black,
thick,
fill = white,
minimum size = 3mm
]
\node[state] (1) {$a$};
\node[state] (2) [below of=1] {$b$};
\node[state] (3) [right of=2] {$c$}; 
\node[state] (4) [below of=3] {$d$}; 
\node[state] (5) [right of=4] {$e$}; 

\path[->] (1) edge node {} (2);
\path[->] (2) edge node {} (3); 
\path[->] (2) edge node {} (5); 
\path[->] (1) edge node {} (3); 
\path[->] (3) edge node {} (4); 
\path[->] (4) edge node {} (5); 
\end{tikzpicture}
\caption{\textbf{Network with five nodes.} A network with nodes $a$, $b$, $c$, $d$, $e$. The source is $a$ and the sink is $e$.}
\label{fig}
\end{figure}
with the (unique) ordering $(a, b, c, d, e)$. Define the activation tensors, which depend on two parameters, $B$ at the node $b$, $A$ at the node $a$, etc., as follows:
$$A=(\alpha,\beta),\qquad  B=D=\begin{pmatrix} \alpha & \beta \\ \beta & \alpha \end{pmatrix}, $$
\begin{center}
\begin{tikzpicture}[scale=0.75,baseline={([yshift=-.5ex]current bounding box.center)}] 
\draw (-1.6,1.4) node[scale=1] {$C = E =$};
\draw (0,0) node[scale=1] {$\beta$};
\draw (2,0) node[scale=1] {$\beta$};
\draw (1,1) node[scale=1] {$\alpha$};
\draw (3,1) node[scale=1] {$\alpha$};
\draw (0,2) node[scale=1] {$\alpha$};
\draw (2,2) node[scale=1] {$\beta$};
\draw (1,3) node[scale=1] {$\beta$};
\draw (3,3) node[scale=1] {$\alpha$};
\draw (0,0.3) -- (0,1.7); 
\draw (0.3,0) -- (1.7,0); 
\draw (2,0.3) -- (2,1.7); 
\draw (0.3,2) -- (1.7,2); 

\draw (1,1.3) -- (1,2.7); 
\draw (1.3,1) -- (2.7,1); 
\draw (3,1.3) -- (3,2.7); 
\draw (1.3,3) -- (2.7,3);

\draw (0.3,0.3) -- (0.7,0.7); 
\draw (0.3,2.3) -- (0.7,2.7); 
\draw (2.3,0.3) -- (2.7,0.7); 
\draw (2.3,2.3) -- (2.7,2.7); 
\end{tikzpicture}.
\end{center}
Then, the total tensor $N$ is given by the BMP
$$ N = \circ(A', B', C', D', E'),$$
where $A', B', C', D', E'$ are the tensors of order $5$ determined as in the procedure of~\Cref{proc}. One computes:
$$ A' = f_{\{3,4,5\}}(b(A))\quad B'=f_{\{4,5\}}(b(B)), \quad C'=f_{\{5\}}(b(C)),\quad  D' = b(f_{\{1,2\}}(D)),\quad E'=f_{\{1,3\}}(E).$$
It follows that $A',B',C',D',E'$ are given by

$$\small  \begin{matrix}                            A'_{00000} = \alpha &   A'_{00001} = \alpha & A'_{00010} =\alpha & A'_{00011} =\alpha & A'_{00100} = \alpha &   A'_{00101} =\alpha  & A'_{00110} =\alpha & A'_{00111} =\alpha \\
A'_{01000} = 0 &   A'_{01001} = 0  & A'_{01010} = 0 & A'_{01011} = 0 & A'_{01100} = 0 &   A'_{01101} =  0 & A'_{01110} = 0 & A'_{01111} =0 \\
A'_{10000} = 0 &   A'_{10001} = 0  & A'_{10010} = 0 & A'_{10011} = 0 & A'_{10100} = 0 &   A'_{10101} =  0 & A'_{10110} = 0 & A'_{10111} = 0 \\
A'_{11000} = \beta &   A'_{11001} = \beta & A'_{11010} = \beta & A'_{11011} = \beta & A'_{11100} = \beta &   A'_{11101} =  \beta & A'_{11110} = \beta & A'_{11111} = \beta \end{matrix} $$
$$\small  \begin{matrix}                           B'_{00000} =\alpha &   B'_{00001} = \alpha   & B'_{00010} = \alpha & B'_{00011} = \alpha & B'_{00100} = 0 &   B'_{00101} = 0 & B'_{00110} =0& B'_{00111} = 0 \\
B'_{01000} = \beta &   B'_{01001} =\beta  & B'_{01010} =\beta & B'_{01011} =\beta & B'_{01100} = 0 &   B'_{01101} = 0 & B'_{01110} = 0& B'_{01111} =0 \\
B'_{10000} = 0&   B'_{10001} = 0 & B'_{10010} = 0& B'_{10011} = 0& B'_{10100} = \beta &   B'_{10101} = \beta & B'_{10110} =\beta & B'_{10111} = \beta \\
B'_{11000} = 0&   B'_{11001} = 0 & B'_{11010} = 0& B'_{11011} = 0& B'_{11100} = \alpha &   B'_{11101} =\alpha   & B'_{11110} =\alpha & B'_{11111} =\alpha  \end{matrix} $$
$$\small  \begin{matrix}                            C'_{00000} = \alpha&   C'_{00001} = \alpha & C'_{00010} = 0& C'_{00011} = 0 & C'_{00100} = \beta &   C'_{00101} = \beta & C'_{00110} = 0 & C'_{00111} = 0 \\
C'_{01000} = \beta&   C'_{01001} = \beta & C'_{01010} = 0 & C'_{01011} = 0& C'_{01100} = \alpha&   C'_{01101} =  \alpha& C'_{01110} =0 & C'_{01111} =0 \\
C'_{10000} = 0&   C'_{10001} = 0 & C'_{10010} = \beta & C'_{10011} = \beta & C'_{10100} = 0 &   C'_{10101} =0  & C'_{10110} =\alpha & C'_{10111} =\alpha \\
C'_{11000} =0 &   C'_{11001} = 0 & C'_{11010} =\beta  & C'_{11011} = \beta & C'_{11100} = 0&   C'_{11101} = 0 & C'_{11110} = \alpha & C'_{11111} = \alpha \end{matrix} $$
$$\small  \begin{matrix}                            D'_{00000} = \alpha &   D'_{00001} = 0  & D'_{00010} =\beta & D'_{00011} = 0 & D'_{00100} = \beta &   D'_{00101} = 0 & D'_{00110} = \alpha & D'_{00111} = 0 \\
D'_{01000} = \alpha &   D'_{01001} = 0  & D'_{01010} =\beta & D'_{01011} = 0 & D'_{01100} = \beta &   D'_{01101} = 0 & D'_{01110} = \alpha & D'_{01111} = 0 \\
D'_{10000} = 0 &   D'_{10001} = \alpha & D'_{10010} = 0 & D'_{10011} = \beta & D'_{10100} = 0 &   D'_{10101} = \beta & D'_{10110} = 0 & D'_{10111} = \alpha \\
D'_{11000} = 0 &   D'_{11001} = \alpha & D'_{11010} = 0 & D'_{11011} = \beta & D'_{11100} = 0 &   D'_{11101} = \beta & D'_{11110} = 0 & D'_{11111} = \alpha \end{matrix} $$
$$\small  \begin{matrix}                           E'_{00000} =\alpha &   E'_{00001} =\beta  & E'_{00010} =\beta & E'_{00011} = \alpha& E'_{00100} =\alpha &   E'_{00101} =\beta  & E'_{00110} =\beta & E'_{00111} =\alpha \\
E'_{01000} =\beta &   E'_{01001} =\alpha  & E'_{01010} =\beta & E'_{01011} = \alpha& E'_{01100} =\beta &   E'_{01101} =\alpha  & E'_{01110} =\beta & E'_{01111} =\alpha\\
E'_{10000} =\alpha &   E'_{10001} =\beta  & E'_{10010} =\beta & E'_{10011} = \alpha& E'_{10100} =\alpha &   E'_{10101} = \beta & E'_{10110} =\beta & E0'_{10111} =\alpha \\
E'_{11000} =\beta &   E'_{11001} =\alpha & E'_{11010} =\beta & E'_{11011} = \alpha& E'_{11100} =\beta &   E'_{11101} =\alpha  & E'_{11110} =\beta & E'_{11111} =\alpha \end{matrix} $$
\smallskip
and, by computing the BMP $\circ(A', B', C', D', E')$, we obtain the correct evaluation of $N$, which results to be
$$\small \begin{matrix}                            N_{00000} = \alpha^5 &   N_{00001} = \alpha^4\beta & N_{00010} = \alpha^3\beta^2 & N_{00011} = \alpha^4\beta \\ N_{00100} = \alpha^3\beta^2 &   N_{00101} = \alpha^2\beta^3  & N_{00110} = \alpha^3\beta^2 & N_{00111} = \alpha^4\beta \\
N_{01000} = \alpha^2\beta^3 &   N_{01001} = \alpha^3\beta^2  & N_{01010} = \alpha\beta^4 & N_{01011} = \alpha^2\beta^3 \\ N_{01100} = \alpha^2\beta^3 &   N_{01101} = \alpha^3\beta^2  & N_{01110} = \alpha^3\beta^2 & N_{01111} =\alpha^4\beta \\
N_{10000} = \alpha^2\beta^3 &   N_{10001} = \alpha\beta^4  & N_{10010} = \beta^5 & N_{10011} = \alpha\beta^4 \\ N_{10100} = \alpha^2\beta^3 &   N_{10101} = \alpha\beta^4  & N_{10110} = \alpha^2\beta^3 & N_{10111} =\alpha^3\beta^2 \\
N_{11000} = \alpha^2\beta^3 &   N_{11001} = \alpha^3\beta^2  & N_{11010} = \alpha\beta^4 & N_{11011} = \alpha^2\beta^3 \\ N_{11100} = \alpha^2\beta^3 &   N_{11101} = \alpha^3\beta^2  & N_{11110} = \alpha^3\beta^2 & N_{11111} = \alpha^4\beta.
\end{matrix}$$
Observe that, in the projective space of tensors of type $2\times 2\times 2\times 2\times 2$, the tensors defined by $N$, when $\alpha,\beta$ vary, describe a rational normal curve of degree $5$ (higly degenerate).
\end{example}

\subsection{Software}

A Python software has been implemented for computing the BMP of tensors of generic order $d$, as in~\Cref{BMP}, and for performing the procedure in~\Cref{proc}.

Given a network, users can provide node activation tensors as input and then perform blow and forget operations to calculate the tensors associated with the nodes. The BMP of these tensors, according to~\Cref{main}, outputs the final tensor of the network. From a computational point of view, there are no limits to be imposed on the number of nodes in the networks, assuming that their underlying graph is directed and acyclic. Tests on networks consisting of 6 or 7 nodes show minimal execution times. The software is open-source and freely available at: \url{https://github.com/MarzialiS/BMP-Network}.

\section*{Acknowledgements}
The authors want to warmly thank Luke Oeding, who introduced them to the early works of Batthacharya and Mesner, Fulvio Gesmundo, and Edinah Gnang, for useful discussions on the subject of the paper.
G.A.D. acknowledges the support provided by the European Union - NextGenerationEU, in the framework of the iNEST - Interconnected Nord-Est Innovation Ecosystem (iNEST ECS00000043 – CUP G93C22000610007) project and its CC5 Young Researchers initiative. S.S. and G.R. acknowledge the support provided by PRIN "FaReX - Full and Reduced order modelling of coupled systems: focus on non-matching methods and automatic learning" project, and by INdAM-GNCS 2019–2020 projects and PON "Research and Innovation on Green related issues" FSE REACT-EU 2021 project. The views and opinions expressed are solely those of the author and do not necessarily reflect those of the European Union, nor can the European Union be held responsible for him.
In addition, G.A.D. would like to acknowledge INdAM–GNCS.

\bibliographystyle{elsarticle-num} 
\bibliography{references}



\end{document}